\newcommand{\CC}{{\mathbb C}}
\newcommand{\C}{{\mathbb C}}
\newcommand{\cD}{{\mathcal D}}
\newcommand{\cO}{{\mathcal O}}
\newcommand{\cM}{{\mathcal M}}
\newcommand{\R}{{\mathbb R}}
\newcommand{\Z}{{\mathbb Z}}
\newcommand{\NN}{{\mathbb N}}
\newcommand\w{{\omega}}
\newcommand\N{{\mathbb{N}}}
\newcommand\D{{\mathcal{D}}}
\newcommand\Q{{\mathbb{Q}}}
\newcommand\M{{\mathcal{M}}}
\newtheorem{theorem}{Theorem}[section]
\newtheorem{prop}[theorem]{Proposition}
\newtheorem{defi}[theorem]{Definition}
\newtheorem{example}[theorem]{Example}
\newtheorem{lemma}[theorem]{Lemma}
\newtheorem{cor}[theorem]{Corollary}
\newtheorem{remark}[theorem]{Remark}
\newtheorem{notation}[theorem]{Notation}
\newtheorem{conjecture}[theorem]{Conjecture}
\newcommand{\subjclass}[2][1991]{%
  \let\@oldtitle\@title%
  \gdef\@title{\@oldtitle\footnotetext{#1 \emph{Mathematics subject classification.} #2}}%
}
\newcommand{\keywords}[1]{%
  \let\@@oldtitle\@title%
  \gdef\@title{\@@oldtitle\footnotetext{\emph{Key words and phrases.} #1.}}%
}
\begin{document}
\title{On the local monodromy of $A$--hypergeometric functions and some monodromy invariant subspaces.}

\author{Mar\'ia-Cruz Fern\'andez-Fern\'andez\thanks{Supported by Ministerio de Econom\'ia y Competitividad
MTM2013-40455-P, MTM2016-75024-P, Feder, P12-FQM-2696 and FQM-333.}}
\subjclass{32C38, 33C70, 32S40}
\keywords{$A$--hypergeometric, $D$--module, regular subdivision and triangulation, monodromy, characteristic polynomial}

\maketitle
\date{}


\begin{abstract}
We obtain an explicit formula for the characteristic polynomial of the local monodromy of $A$--hypergeometric functions 
with respect to small loops around a coordinate hyperplane $x_i =0$. This formula is similar to the one obtained by 
Ando, Esterov and Takeuchi for the local monodromy at infinity. Our proof is combinatorial and can be adapted to provide an alternative proof for the latter formula as well. 
On the other hand, we also prove that the solution space at a nonsingular point of certain irregular and irreducible $A$--hypergeometric $D$--modules has a nontrivial global monodromy invariant subspace.
\end{abstract}

\section{Introduction}
Gel'fand, Graev, Kapranov and Zelevinsky started the study of $A$--hypergeometric systems in \cite{GGZ} and \cite{GKZ}. 
These systems of linear partial differential equations generalize all of the classical hypergeometric equations and have many applications in other areas of Mathematics. They are 
determined by a matrix $A=(a_1 \cdots a_n )\in \Z^{d\times n}$ such that $\Z A :=\sum_{j=1}^n \Z a_i\simeq \Z^d$ 
and a parameter vector $\beta \in \C^d$. More precisely, let $H_A (\beta )$ be the left ideal of the Weyl algebra $D=\C [x_1 ,\ldots ,x_n ]\langle \partial_1 ,\ldots ,\partial_n \rangle$ generated by the following
set of differential operators:

\begin{equation}
\Box_u := \partial^{u_+}-\partial^{u_{-}} \; \;\; \; \mbox{ for }
u\in \Z^n , \; Au=0 \label{Toric-operators}
\end{equation} where $u=u_+-u_-$ and  $u_+, u_- \in \NN^n$ have disjoint supports, and

\begin{equation}
E_i - \beta_i := \sum_{j=1} ^n a_{ij}x_j\partial_j -\beta_i \; \; \;
\; \mbox{ for } i=1,\ldots, d. \label{Euler-operators}
\end{equation}

The $A$--hypergeometric $D$--module with parameter $\beta$ is $M_A(\beta)=D /D H_A (\beta )$. One can also consider its analytification version $\cM_A (\beta )=\cD/\cD H_A (\beta)$, where $\cD$ denotes the sheaf of linear partial differential operators 
with coefficients in the sheaf $\cO$ of holomorphic functions on $\C^n$. Such a $\cD$--module was proved to be holonomic in \cite{GGZ} and \cite{Ado}. Moreover, if $\beta$ is nonresonant for $A$ (i.e., the boundary of $\sum_{i=1}^n \R_{\geq 0} a_j$ does not contain any point of $\beta +\Z A$) they also proved that the holonomic rank of $\cM_A (\beta )$ (i.e. the dimension of its space of holomorphic solutions) 
equals the normalized volume of $A$ (see (\ref{nvol})). The exact set of parameters for which this is true was given in \cite{MMW}. It is also well known that the $A$--hypergeometric system is regular holonomic if and only if the $\Q$--rowspan of $A$ contains the vector $(1,1,\ldots, 1)$ (see \cite[Section 6]{Hotta},\cite[Theorem 2.4.11]{SST},\cite[Corollary 3.16]{SW}).

One fundamental open problem in this setting is to understand the monodromy of the solutions of a general $A$--hypergeometric $D$--module. By \cite[Thm 2.11]{GKZ2} if $\beta\in \CC^d$ is nonresonant and $M_A (\beta)$ is regular holonomic then the monodromy representation of its solutions is irreducible.
Adolphson conjectured this fact to be true also in the irregular case (see the comment in \cite{Ado} after Corollary 5.20), but we will see in this note that this is not the case (see Corollary \ref{one-dimensional-invariant}).
On the other hand, it was proved in \cite{Saito} that $M_A (\beta)$ is an irreducible $D$--module if and only if $\beta$ is nonresonant. 
Independently, in \cite{SW2} the set of parameters $\beta$ for which $\C (x)\otimes_{\C [x]} M_A (\beta)$ is an irreducible $\C (x)\otimes_{\C [x]} D$--module is characterized, generalizing \cite{Beu2}. It was previously proved in \cite{Wal07}
that the irreducibility of $\C (x)\otimes_{\C [x]} M_A (\beta)$ depends only on the equivalence class of $\beta \in \C^d$ modulo $\Z A :=\sum_{j=1}^n \Z a_j$. Let us point out that $M$ being an irreducible $D$--module implies 
$\C (x)\otimes_{\C [x]} M$ being an irreducible $\C (x)\otimes_{\C [x]} D$--module. Moreover, for a regular holonomic $\cD$--module $\cM$, it is equivalent to say that $\cM$ is an irreducible $\cD$--module and that 
its solution sheaf complex $\R Hom_{\cD}(\M, \cO )$ is an irreducible perverse sheaf by the Riemann--Hilbert correspondence of Kashiwara \cite{Kas84} and Mebkhout \cite{Meb84}.

In some special cases of regular $A$--hypergeometric systems, Beukers provided a method to compute the monodromy group of the solutions of $M_A (\beta)$ \cite{Beu}. Monodromy of regular bivariate hypergeometric systems of Horn type is also investigated in \cite{ST}. 
On the other hand, in \cite{Tak} and \cite{AET} the authors provide a formula for the characteristic polynomial of the local monodromy at infinity of the $A$--hypergeometric functions with nonresonant parameters, that is, with respect to large enough loops around $x_j=0$ for $j=1,\ldots ,n$. 
The proof of this result in \cite{AET} is based on the use of rapid decay homology cycles constructed in \cite{ET}. 
One goal of this paper is to obtain a similar formula for the corresponding local monodromy with respect to small enough loops around $x_j=0$. To this end, 
we first characterize in Section \ref{Sec2} those regular triangulations of $A$ that yield a basis of convergent $\Gamma$--series solutions of the $A$--hypergeometric system in a common open set containing this type of loops. 
We will see that they correspond to refinements of a particular regular polyhedral subdivision of $A$. 
In Section \ref{Sec3} we recall the construction of $\Gamma$--series solutions of $A$--hypergeometric systems, introduced in \cite{GKZ}.
In Section \ref{Sec4} we obtain the formula for the characteristic polynomial of the local monodromy around $x_j=0$ which depends only on this polyhedral subdivision (see Theorem \ref{main-theorem}). 
Our proof can be easily adapted to obtain the monodromy at infinity (see Remark \ref{remark-infinity}), providing a simpler proof of the main result in \cite{AET}. 
In Section \ref{Sec5}, we conjecture the existence of a global monodromy invariant subspace of solutions of $M_A(\beta)$ (see Conjecture \ref{monodromy-invariant-subspaces-conjecture}) and we prove it under certain additional condition (see Proposition \ref{monodromy-invariant-subspaces}). We also show that a proof of the conjecture would characterize   
when the solution space of $M_A (\beta)$ at a nonsingular point has reducible (global) monodromy representation (i.e., when it has a nontrivial monodromy invariant subspace). In particular, there is a family of irregular 
$A$--hypergeometric systems with nonresonant parameters whose solutions spaces at a nonsingular point are direct sums of one--dimensional monodromy invariant subspaces, 
despite the fact that $M_A (\beta)$ is an irreducible $D$--module in this case (see Corollary \ref{one-dimensional-invariant}).

The author is grateful to Saiei-Jaeyeong Matsubara-Heo for pointing out a gap in a previous version of Section \ref{Sec5}.

\section{On certain regular triangulations of $A$}\label{Sec2}

In this section we recall the definition of regular triangulation of a full rank matrix $A=(a_1  \cdots a_n )\in \Z^{d\times n}$, consider certain open sets in $\CC^n$ associated to them and determine those regular triangulations that will be useful for local monodromy computations.

For any set $\tau \subseteq \{1,\ldots ,n\}$ let $\Delta_{\tau}$ be the convex hull of $\{ a_i : \; i\in \tau \}\cup \{\mathbf{0} \}\subseteq \R^d$. In order to simplify notation, we 
shall identify $\tau$ with the set $\{a_i : \; i \in \tau \}$ and with its convex hull and denote by $A_{\tau}$ the corresponding submatrix of $A$. Let us denote $\Z \tau = \Z A_{\tau} =\sum_{i\in
\tau} \Z a_i \subseteq \Z^d$ and $\operatorname{pos}(\tau ):=\sum_{i\in \tau} \R_{\geq 0} a_i
\subseteq \R^d$. We will also denote $\overline{\tau}=\{1, \ldots ,n \}\setminus \tau$.

We will assume for simplicity that $\Z A =\Z^d$ throughout this paper.

A vector $\w =(\w_1 ,\ldots ,\w_n ) \in \R^n$ defines an abstract polyhedral complex
$\mathrm{T}_{\w}$ with vertices in $\{1, \ldots , n\}$ as follows:  $\tau \in \mathrm{T}_{\omega}$ iff there
exists a vector $\mathbf{c}\in \R^d$ such that
\begin{equation}
\langle \mathbf{c} , a_j \rangle =\omega_j  \mbox{ for all } j\in \tau \label{equality-subdivision}
\end{equation}
\begin{equation}
\langle \mathbf{c} , a_j \rangle < \omega_j \mbox{ for all } j \notin \tau \label{inequality-subdivision}.
\end{equation}

We will say that $\w$ is a weight vector and that $T_{\w}$ is a regular subdivision of $A$ if $\operatorname{pos}(A)=\cup_{\tau \in T_{\w}} \operatorname{pos}(\tau)$. This is always the case if either $\w_i >0$ for all $i=1,\ldots, n$ or $A$ is pointed (i.e. the intersection of $\R^n_{>0}$ with the 
$\Q$-rowspan of $A$ is nonempty). If $T$ is any regular subdivision of $A$ then the set $C(T)=\{ \w \in \R^n : \; T=T_{\w}\}$ is a convex polyhedral cone. 
The closures of these cones form the socalled secondary fan of $A$, introduced and studied by Gelfand, Kapranov and Zelevinsky \cite{GKZ3} (see also \cite{Sturm}).

\begin{remark}
If we take $\w =(1, \ldots ,1)$ then $T_{\w}$ is the set of facets of $\Delta_A$ that do not contain the origin. Let us denote $\Gamma_A :=T_{\w}$ in this case.
\end{remark}

\begin{defi}
A weight vector $\omega \in \R^n$ is said to be generic if $\mathrm{T}_{\omega}$ is an abstract simplicial complex. In this case, $T_{\w}$ is called a regular triangulation of $A$.
\end{defi}

\begin{remark}\label{remark-refinement}
A weight vector $\w$ defines a regular triangulation $T_{\w}$ if and only if $C(T_{\w})$ is a full-dimensional cone in the secondary fan of $A$. On the other hand, a vector $\w'$ belongs to the closure of the cone $C(T_{\w})$ if and only if $T_{\w}$ is a refinement of $T_{\w '}$.
\end{remark}

A set $ \sigma \subseteq \{1, \ldots , n\}$ is called a simplex if the columns of $A_{\sigma}$ form a basis of $\R^d$. For any simplex $\sigma$ we set 
\begin{equation}
U_{\sigma} (R):= \{x\in \C^n : \;  \; |x_{j}|< R
|x_{\sigma}^{A_{\sigma}^{-1}a_{j}}|, \; \forall j \notin \sigma \mbox{ such that}\; |A_{\sigma}^{-1}a_{j}|=1 \} \subseteq \CC^n \label{open-set-simplex}\end{equation} 
where $R>0$, $|b|$ denotes the sum of the coordinates of $b\in \R^d$, $x_{\sigma}=(x_i : \; i\in \sigma)$ and we use the multi-index notation for $x_{\sigma}^{A_{\sigma}^{-1}a_{j}}$.

\begin{remark}\label{remark-open-set}
For any $R>0$ the open set $U_T(R):=\cap_{\sigma \in T } U_{\sigma}(R)$ is not empty for any regular triangulation $T$ of $A$ since it contains those
points $x\in (\C^{\ast})^n$ for
which $(-\log |x_1 |,\ldots ,-\log |x_n |)$ lies in a sufficiently
far translation of the nonempty open cone $C(T)$ inside itself (see \cite[Proposition 2 and Section 1.2]{GKZ} and \cite[Remark 6.1]{Fer}).
\end{remark}

\begin{notation}\label{regular-subdivision-T0}
Let us denote $\w_0=\w_0 (\varepsilon):=(1, \ldots , 1)+ \varepsilon  (0,\ldots , 0, 1) $ for $\varepsilon >0$ small enough so that we have the equality of polyhedral subdivisions $T_{w_0(\varepsilon ')}=T_{\w_0 (\varepsilon)}$ for all $\varepsilon ' \in (0,\varepsilon)$. We will
also denote by $T_0$ this regular subdivision.
\end{notation}

\begin{remark}\label{T0description}
If $a_n$ is not a vertex of $\Delta_A$ then $T_{0}=\{\tau \setminus \{n \}: \; \tau \in \Gamma_A \}$. Let us assume that $a_n$ is a vertex of $\Delta_A$. 
If $a_n \notin \tau \in \Gamma_A$ then $\tau \in T_{0}$. If $a_n \in \tau \in \Gamma_A$, let $\tau '$ 
be the convex hull of all the columns of $A$ in $\tau$ but $a_n$. Then $\tau'\in T_{0}$. The rest of facets of $T_0$ are of the form 
$\Gamma \cup \{a_n \} \in T_{0}$ for any facet $\Gamma$ of $\tau '$ that is not contained in a facet of $\tau$ (see Figure 1 in Example \ref{example}). 
\end{remark}

The following lemma is a key ingredient in the proof of Theorem \ref{main-theorem}.

\begin{lemma}\label{key-Lemma}
Let $T$ be a regular triangulation of $A$ that refines $\Gamma_{A}$. The following conditions are equivalent:
\begin{enumerate}
\item[i)] $\w_0$ belongs to the closure of the cone $C(T)$.
 \item[ii)] $T$ refines the polyhedral subdivision $T_0$.
 \item[iii)] $U_T (R)\cap \{x_n =0\}\neq \emptyset$ for all $R>0$.
 \item[iv)] $U_T (R)\cap \{x_n =0\}\neq \emptyset$ for some $R>0$.
\end{enumerate}
\end{lemma}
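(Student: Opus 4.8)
The plan is to route all three conditions through the single combinatorial condition
\[
(\star)\qquad (A_{\sigma}^{-1}a_j)_n\leq 0 \ \text{ for every maximal }\sigma\in T\text{ with }n\in\sigma\text{ and every }j\notin\sigma\text{ with }|A_{\sigma}^{-1}a_j|=1 .
\]
The equivalence i) $\Leftrightarrow$ ii) is immediate: taking $\omega'=\omega_0$ in Remark \ref{remark-refinement} and recalling $T_0=T_{\omega_0}$ (Notation \ref{regular-subdivision-T0}), the vector $\omega_0$ lies in $\overline{C(T)}$ exactly when $T$ refines $T_0$. So it suffices to prove i) $\Leftrightarrow(\star)$ and iii) $\Leftrightarrow(\star)$. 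Throughout I use the standard description of the closed secondary cone: $\omega\in\overline{C(T)}$ if and only if for each maximal simplex $\sigma\in T$ the unique linear form $\mathbf{c}_\sigma$ with $\langle\mathbf{c}_\sigma,a_i\rangle=\omega_i$ for all $i\in\sigma$ satisfies $\langle\mathbf{c}_\sigma,a_j\rangle\leq\omega_j$ for every $j$. I also use the two identities $\langle\mathbf{c}',a_j\rangle=|A_{\sigma}^{-1}a_j|$ and $\langle\mathbf{e},a_j\rangle=(A_{\sigma}^{-1}a_j)_n$, where $\mathbf{c}'$ is the linear form of value $1$ on every $a_i$ ($i\in\sigma$) and $\mathbf{e}$ is the form dual to $a_n$ inside $\sigma$ (value $1$ on $a_n$, value $0$ on the other $a_i$); both follow at once by writing $a_j=\sum_{i\in\sigma}(A_{\sigma}^{-1}a_j)_i\,a_i$.

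For iii) $\Leftrightarrow(\star)$ I would fix $\sigma\in T$ and examine $U_{\sigma}(R)\cap\{x_n=0\}$ after taking logarithms, so that the defining inequality $|x_j|<R\,|x_{\sigma}^{A_{\sigma}^{-1}a_j}|$ becomes $\log|x_j|<\log R+\sum_{i\in\sigma}(A_{\sigma}^{-1}a_j)_i\log|x_i|$. If $n\notin\sigma$ the monomial on the right does not involve $x_n$, so setting $x_n=0$ only makes the constraint indexed by $j=n$ hold automatically. If $n\in\sigma$, the exponent of $x_n$ on the right is exactly $(A_{\sigma}^{-1}a_j)_n$; putting $x_n=0$ makes the right-hand side vanish when this exponent is positive (forcing the impossible $|x_j|<0$), leaves it finite and positive when the exponent is $0$, and makes it infinite when the exponent is negative. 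Hence $U_{\sigma}(R)\cap\{x_n=0\}=\emptyset$ for all $R$ as soon as some coplanar $j$ violates $(\star)$, so iii) fails whenever $(\star)$ fails. Conversely, when $(\star)$ holds the only surviving constraints are the finitely many strict inequalities $L_{(\sigma,j)}(\omega)>-\log R$ in the variables $(\omega_i)_{i\neq n}=(-\log|x_i|)_{i\neq n}$ coming from coplanar pairs; projecting any weight of $C(T)$ to these coordinates makes every $L_{(\sigma,j)}$ strictly positive, and rescaling that projection by a large factor then satisfies all of them for any prescribed $R$. This produces a point of $U_T(R)\cap\{x_n=0\}$ for every $R>0$, so the nonemptiness in iii) is independent of $R$.

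For i) $\Leftrightarrow(\star)$ I would test $\omega_0\in\overline{C(T)}$ against each maximal $\sigma\in T$ with the secondary-cone description above. When $n\notin\sigma$ one has $\mathbf{c}_\sigma=\mathbf{c}'$, and the required inequalities $\langle\mathbf{c}',a_j\rangle\leq(\omega_0)_j$ are already guaranteed by the hypothesis that $T$ refines $\Gamma_A$ (equivalently $(1,\ldots,1)\in\overline{C(T)}$), the case $j=n$ being strict because $(\omega_0)_n=1+\varepsilon$. When $n\in\sigma$ I would write $\mathbf{c}_\sigma=\mathbf{c}'+\varepsilon\,\mathbf{e}$, so that $\langle\mathbf{c}_\sigma,a_j\rangle\leq 1$ for $j\notin\sigma$ reads $|A_{\sigma}^{-1}a_j|+\varepsilon\,(A_{\sigma}^{-1}a_j)_n\leq 1$. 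Since $T$ refines $\Gamma_A$ gives $|A_{\sigma}^{-1}a_j|\leq 1$, this holds with room to spare whenever $|A_{\sigma}^{-1}a_j|<1$ (using that $\varepsilon$ is small, as in Notation \ref{regular-subdivision-T0}), and for the coplanar indices $|A_{\sigma}^{-1}a_j|=1$ it collapses to exactly $(A_{\sigma}^{-1}a_j)_n\leq 0$. Thus $\omega_0\in\overline{C(T)}$ is equivalent to $(\star)$, closing the cycle.

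The main obstacle I anticipate is the case $n\in\sigma$ in the last step: one must separate the slack inequalities $|A_{\sigma}^{-1}a_j|<1$ (absorbed by choosing $\varepsilon$ small) from the tight coplanar ones $|A_{\sigma}^{-1}a_j|=1$, and this separation is precisely where the standing hypothesis that $T$ refines $\Gamma_A$ is indispensable — without it the quantities $|A_{\sigma}^{-1}a_j|$ could exceed $1$ and the reduction to $(\star)$ would break down. The two identities $\langle\mathbf{c}',a_j\rangle=|A_{\sigma}^{-1}a_j|$ and $\langle\mathbf{e},a_j\rangle=(A_{\sigma}^{-1}a_j)_n$ are the bookkeeping that forces both halves of the argument to land on the same condition $(\star)$; verifying them, together with the $R$-independence of the nonemptiness in iii), are the routine points to handle with care.
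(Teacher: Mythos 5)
Your proposal is correct, and its computational core coincides with the paper's: both arguments reduce everything to the sign condition $(\star)$ on $(A_{\sigma}^{-1}a_j)_n$ for coplanar pairs, via the identity $(\omega_0)_{\sigma} A_{\sigma}^{-1}a_j = |A_{\sigma}^{-1}a_j| + \varepsilon\, (A_{\sigma}^{-1}a_j)_n$ together with the observation that a positive exponent of $x_n$ on the right-hand side of (\ref{open-set-simplex}) is incompatible with $x_n=0$. The difference is organizational. The paper proves the cycle ii) $\Rightarrow$ iii) $\Rightarrow$ i): for ii) $\Rightarrow$ iii) it derives $(\star)$ from the supporting form $\mathbf{c}=(1,\ldots,1,1+\varepsilon)A_{\sigma}^{-1}$ of the cell $\tau\in T_0$ containing $\sigma$, and then shows that any point $p\in U_T$ with nonzero coordinates projects to a point $\pi_n(p)\in U_T\cap\{x_n=0\}$; for iii) $\Rightarrow$ i) it avoids your description of the closed secondary cone by a convexity trick, showing $\omega\in C(T)\Rightarrow \omega+\omega_0\in C(T)$, which forces $\omega_0\in\overline{C(T)}$ in the limit. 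You instead make $(\star)$ an explicit hub and prove i) $\Leftrightarrow(\star)$ and iii) $\Leftrightarrow(\star)$, using the weak-inequality description of $\overline{C(T)}$ and building a point of $U_T\cap\{x_n=0\}$ by exponentiating a scaled weight vector of $C(T)$ rather than projecting an existing point of $U_T$. Your organization is more symmetric and isolates the combinatorial content cleanly, at the cost of invoking the closed secondary cone description as a known fact (harmless: $C(T)$ is a nonempty full-dimensional cone cut out by the strict inequalities $\omega_{\sigma}A_{\sigma}^{-1}a_j<\omega_j$, so its closure is given by the weak versions); the paper's route gets by with only the definitions and Remark \ref{remark-refinement}, and its projection argument yields slightly more information, namely that the projection of every such point of $U_T$ already lies in $U_T\cap\{x_n=0\}$. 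Both proofs share the same mild interpretive gloss of (\ref{open-set-simplex}) at points where a coordinate appearing with negative exponent vanishes, so no additional gap arises there.
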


\begin{proof} The equivalence of i) and ii) is just a particular case of Remark  \ref{remark-refinement} and it is obvious that iii)$\Rightarrow$ iv).
Let us prove first ii)$\Rightarrow$ iii). Since $T$ is a regular triangulation that refines $T_0$, there exists $\w'\in \mathbb{R}^n$ such that $T=T_{\w}$ with $\w=\w_0 (\varepsilon ) + \varepsilon^2 \w'$ for $\varepsilon>0$ small enough.
Fix any $R>0$. We have that $U_T := U_T(R) \subseteq \CC^n$ is a nonempty open set. Thus, we can choose $p=(p_1 ,\ldots , p_n )\in U_T $ such that $p_j\neq 0$ for $1\leq j <n$. Let us prove that $\pi_n (p):=(p_1 , \ldots , p_{n-1} ,0) \in U_T $. 
Since $p\in U_T $ it is clear that $\pi_n (p)$ satisfies all the inequations in (\ref{open-set-simplex}) that do not depend on $x_n$. 
Obviously, it also satisfies all the inequations of the form $|x_n | < R
|x_{\sigma}^{A_{\sigma}^{-1}a_{n}}|$. Thus we only need to check that $\pi_n (p)$ satisfies the inequations $|x_{j}|< R
|x_{\sigma}^{A_{\sigma}^{-1}a_{j}}|$ for all $ j \notin \sigma$ whenever $n \in \sigma \in T$ and $|A_{\sigma}^{-1}a_j|=1$. It is enough to show that the last coordinate of $A_{\sigma}^{-1}a_{j}$ is either zero or negative in this case.
Since $\sigma \in T$ there exists $\tau \in T_{0}$ such that $\sigma \subseteq \tau$. Thus, 
there exists a vector $\mathbf{c}\in \R^d$ satisfying (\ref{equality-subdivision}) and (\ref{inequality-subdivision}) for the vector $\w_0$. Since $\sigma\subseteq \tau$ is a simplex we obtain from  (\ref{equality-subdivision}) that 
$\mathbf{c}=(1,\ldots , 1 , 1+ \varepsilon) A_{\sigma}^{-1}$. Thus, by (\ref{inequality-subdivision}) we have that $(1,\ldots , 1 , 1+ \varepsilon) A_{\sigma}^{-1}a_j \leq 1$ for all $j\notin \sigma$ for all $\varepsilon >0$ small enough. For all $j\notin \sigma$ such that $|A_{\sigma}^{-1}a_j|=1$, 
this implies that the last coordinate of $A_{\sigma}^{-1}a_j$ is either zero or negative. As a consequence, $\pi_n (p) \in U_T$.

Let us prove now iv)$\Rightarrow$i). Since $C(T)$ is a convex cone, it is enough to show that if $\w \in C(T)$ then $\w + \w_0 \in C(T)$. Notice that $\w \in C(T)$ if and only if $T=T_{\w}$ and from (\ref{equality-subdivision}) and (\ref{inequality-subdivision}) 
this holds exactly when $\w_{\sigma}A_{\sigma}^{-1} a_j < \w_j$ for all $j\notin \sigma$ and for all $\sigma \in T$. Since $T$ refines $\Gamma_A$, we have $|A_{\sigma}^{-1}a_j|\leq 1$ for all $j\notin \sigma$, $\forall \sigma \in T$. 
Since $U_T (R)\cap \{x_n =0\}\neq \emptyset$ for some $R>0$, we have that for all $\sigma \in T$, $\forall j \notin \sigma$ such that $|A_{\sigma}^{-1}a_j|=1$ then the last coordinate of $A_{\sigma}^{-1}a_j$ must be nonpositive if $n\in \sigma$.  
Hence, we have $(\w_0)_{\sigma} A_{\sigma}^{-1}a_j \leq 1 \leq \w_{0,j}$ for all $j\notin \sigma$ and thus $(\w + \w_0)_{\sigma} A_{\sigma}^{-1} a_j < \w_j + \w_{0,j}$ for all $j\notin \sigma$ and for all $\sigma \in T$. This implies that $\w +\w_0 \in C(T)$.
\end{proof}

\section{$\Gamma$--series solutions of $\M_A (\beta )$}\label{Sec3}

For any set $\tau \subseteq \{1,\ldots ,n\}$, we recall that the normalized volume of $\tau$ (with respect to the lattice $\Z A =\Z^d$) is given by:
\begin{equation}
\operatorname{vol}_{\Z^d}(\tau)
=d! \operatorname{vol}(\Delta_{\tau})\label{nvol}\end{equation} where
$\operatorname{vol}(\Delta_{\tau})$ denotes the Euclidean volume
of $\Delta_{\tau}$.
If $\sigma\subseteq \{1,\ldots ,n\}$ is a simplex, the normalized volume of $\sigma$ with
respect to $\Z^d$ is equal to $[\Z^d : \Z A_{\sigma}]=|\det (A_{\sigma})|$.

For $v\in \C^n$ with $A v =\beta$ the $\Gamma$--series defined in
\cite{GKZ}:

$$\varphi_v : =\sum_{u\in L_A} \frac{1}{\Gamma (v+u+1)} x^{v+u}$$ is
formally annihilated by the differential operators
(\ref{Toric-operators}) and (\ref{Euler-operators}). Here $\Gamma$
is the Euler Gamma function and $L_A :=\ker (A)\cap \Z^n$. These series were used in \cite{GKZ} in order to construct bases 
of holomorphic solutions of $\M_A (\beta )$ in the case when all the columns of $A$ belong to the same hyperplane, but they can be used in the 
general case too (see for example \cite{Ohara-Takayama}, \cite{Fer}, \cite{DMM}).

Let $v_{\sigma}^{\mathbf{k}}\in \CC^n$ be the vector satisfying $A v_{\sigma}^{\mathbf{k}}=\beta$ and $(v_{\sigma}^{\mathbf{k}})_j = k_j$ for $j\notin \sigma$, where
$\mathbf{k}=(k_i )_{i\notin \sigma }\in \N^{\overline{\sigma}}.$

We consider the series: $$\phi_{\sigma}^{\mathbf{k}}:=\varphi_{v_{\sigma}^{\mathbf{k}}}=x_{\sigma}^{A_{\sigma}^{-1}\beta}\sum_{\mathbf{k}+
\mathbf{m}\in \Lambda_{\mathbf{k}}} \frac{x_{\sigma}^{-
A_{\sigma}^{-1}(\sum_{i\notin \sigma} (k_i +m_i ) a_i)}
x_{\overline{\sigma}}^{\mathbf{k} +\mathbf{m}}}{\Gamma
(A_{\sigma}^{-1}(\beta -\sum_{i\notin \sigma} (k_i +m_i )
a_i)+\mathbf{1})(\mathbf{k}+\mathbf{m})!}
$$ where $$
\Lambda_{\mathbf{k}} :=\{\mathbf{k}+\mathbf{m}=(k_i + m_i )_{i\in
\overline{\sigma}}\in \N^{n-d}: \; \sum_{i\in \overline{\sigma}}
a_{i} m_i \in \Z A_{\sigma} \}.$$ Notice that
$\phi_{\sigma}^{\mathbf{k}}$ is zero if and only if for all
$\mathbf{m}\in \Lambda_{\mathbf{k}} $, $A_{\sigma}^{-1}(\beta -
\sum_{i\notin \sigma } (k_i + m_i ) a_i)$ has at least one negative
integer coordinate.

Let $$\Omega_{\sigma}\subseteq \NN^{\overline{\sigma}}$$ be a set of representatives for the different classes with respect to the following equivalence relation in $\NN^{\overline{\sigma}}$: we say that $\mathbf{k}\sim \mathbf{k'}$ if and only if $A_{\overline{\sigma}}\mathbf{k}-A_{\overline{\sigma}} \mathbf{k'}\in \Z A_{\sigma}$. 
Thus, $\Omega_{\sigma}$ is a set of cardinality $\operatorname{vol}_{\Z^d} (\sigma )$.

Given a regular triangulation $T$ of $A$ we will say that $\beta $ is very generic if 
$A_{\sigma}^{-1}(\beta - \sum_{i\notin \sigma} k_i a_i)$ does not
have any integer coordinate for all $\sigma \in T$ and for 
all $\mathbf{k} \in \Omega_{\sigma}$. Thus, very generic parameter vectors
$\beta$ lie in the complement of a countable union of hyperplanes.

If $\beta \in \CC^d$ is very generic, the series $\phi_{\sigma}^{\mathbf{k}}$ is convergent if and only if $\sigma$ is contained in a facet of $\Gamma_A$ (see for example \cite[Corollary 3.9]{Fer}). In this case, it is convergent in $U_{\sigma}(R)\cap ((\C^{\ast})^{\sigma}\times \C^{\overline{\sigma}})$ for some $R>0$. 
Moreover, we have the following result which is a slightly modified version of \cite[Theorem 2]{Ohara-Takayama} (this version can also be seen as the particular case $\tau =A$ in \cite[Section 6.2]{Fer} by substituting ``Gevrey series along $Y_{\tau}$'' by 
``convergent series''). The fact that our open set $U_T (R)$ is defined by less restrictions than the ones used in \cite{Ohara-Takayama} is important for the results in Section \ref{Sec5}.

\begin{theorem}\label{key2theorem}
If $T$ is a regular triangulation of $A$ that refines 
$\Gamma_A$ and $\beta \in \CC^d$ is very generic, the set $\{\phi_{\sigma}^{\mathbf{k}}: \; \sigma \in T, \; \mathbf{k} \in \Omega_{\sigma}\}$ is a basis of holomorphic solutions of $\M_A (\beta)$ in the open set $U_T ( R )\cap (\CC^{\ast})^n$ for some $R>0$.
\end{theorem}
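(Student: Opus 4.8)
The plan is to establish three things: that each $\phi_{\sigma}^{\mathbf{k}}$ is a nonzero holomorphic solution of $\M_A(\beta)$ on a fixed common domain $U_T(R)\cap(\C^\ast)^n$, that the number of these series equals the holonomic rank of $\M_A(\beta)$, and that they are linearly independent. Since every point of this domain is a nonsingular point of $\M_A(\beta)$, where the germ solution space has dimension equal to the holonomic rank, these three facts together show that the set is a basis.

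First I would record that each $\phi_{\sigma}^{\mathbf{k}}=\varphi_{v_{\sigma}^{\mathbf{k}}}$ is a formal solution: since $A v_{\sigma}^{\mathbf{k}}=\beta$, the $\Gamma$--series $\varphi_{v_{\sigma}^{\mathbf{k}}}$ is annihilated by the operators (\ref{Toric-operators}) and (\ref{Euler-operators}), as recalled in Section \ref{Sec3}. It is moreover nonzero: the very generic hypothesis forces $A_{\sigma}^{-1}(\beta-\sum_{i\notin\sigma}k_i a_i)$ to have no integer coordinate, so the term corresponding to $\mathbf{m}=0$ does not vanish. For convergence I would use that $T$ refines $\Gamma_A$, so that every $\sigma\in T$ is contained in a facet of $\Gamma_A$; by the convergence criterion recalled before the statement (see \cite[Corollary 3.9]{Fer}) each $\phi_{\sigma}^{\mathbf{k}}$ then converges on $U_{\sigma}(R_{\sigma})\cap((\C^\ast)^{\sigma}\times\C^{\overline{\sigma}})$ for some $R_{\sigma}>0$. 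Setting $R=\min_{\sigma\in T}R_{\sigma}$ and using $U_T(R)=\cap_{\sigma\in T}U_{\sigma}(R)$, all of the series converge simultaneously on the nonempty open set $U_T(R)\cap(\C^\ast)^n$, where they define genuine holomorphic solutions.

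For the count I would use $|\Omega_{\sigma}|=\operatorname{vol}_{\Z^d}(\sigma)$ together with the additivity of normalized volume over the maximal simplices of the triangulation $T$, which gives $\sum_{\sigma\in T}\operatorname{vol}_{\Z^d}(\sigma)=\operatorname{vol}_{\Z^d}(A)$. As $\beta$ is very generic it is in particular nonresonant, so by the results recalled in the introduction (\cite{GKZ}, \cite{Ado}, \cite{MMW}) the holonomic rank of $\M_A(\beta)$ equals this normalized volume, and the cardinality of $\{\phi_{\sigma}^{\mathbf{k}}\}$ matches the dimension of the local solution space.

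The main obstacle is linear independence, which I would deduce from a leading--term argument driven by a weight vector $\omega$ in the cone $C(T)$, which is full dimensional by Remark \ref{remark-refinement} and may be chosen generic enough that each series has a unique term of minimal $\omega$--weight. Fixing $\sigma\in T$ and writing $c=\omega_{\sigma}A_{\sigma}^{-1}$, the defining relations (\ref{equality-subdivision}) and (\ref{inequality-subdivision}) give $\langle c,a_j\rangle=\omega_j$ for $j\in\sigma$ and $\langle c,a_j\rangle<\omega_j$ for $j\notin\sigma$; for any $u\in L_A$ with $\overline{\sigma}$--part $\mathbf{m}$, the relation $\sum_j u_j a_j=0$ then yields $\langle\omega,u\rangle=\sum_{j\notin\sigma}(\omega_j-\langle c,a_j\rangle)m_j$, a positive linear form in $\mathbf{m}$, which singles out a well-defined $\omega$--minimal monomial $x^{\ell(\sigma,\mathbf{k})}$ in each $\phi_{\sigma}^{\mathbf{k}}$. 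These leading exponents are pairwise distinct: every exponent occurring in $\phi_{\sigma}^{\mathbf{k}}$ has its $\overline{\sigma}$--coordinates in $\N$ and, by the very generic hypothesis, its $\sigma$--coordinates non-integral, so from $\ell(\sigma,\mathbf{k})$ one recovers $\sigma$ as its set of non-integral coordinates and then the class of $\mathbf{k}$ from the remaining coordinates. A vanishing linear combination, examined in order of increasing $\omega$--weight, then forces the coefficient of each leading monomial to vanish in turn. The delicate points I would treat with care are the choice of a weight making all minimal terms unique and the verification that equal leading exponents indeed force the same pair $(\sigma,\mathbf{k})$ modulo the equivalence defining $\Omega_{\sigma}$.
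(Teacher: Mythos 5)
Your proposal is correct in substance, but note what the paper actually does here: it gives no proof of Theorem \ref{key2theorem} at all, quoting it instead as a slightly modified version of \cite[Theorem 2]{Ohara-Takayama}, alternatively as the case $\tau=A$ of \cite[Section 6.2]{Fer}. What you have written is essentially a blind reconstruction of the argument in those references: convergence of each $\phi_{\sigma}^{\mathbf{k}}$ on a common domain because every $\sigma\in T$ lies in a facet of $\Gamma_A$, the count $\sum_{\sigma\in T}|\Omega_{\sigma}|=\sum_{\sigma\in T}\operatorname{vol}_{\Z^d}(\sigma)=\operatorname{vol}_{\Z^d}(A)=\mathrm{rank}$, and linear independence via exponents. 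One simplification you could exploit: your own observation that every exponent of $\phi_{\sigma}^{\mathbf{k}}$ has non-integral coordinates exactly on $\sigma$ and $\overline{\sigma}$-part congruent to $\mathbf{k}$ shows that the supports of distinct series are pairwise \emph{disjoint}, not merely distinguished by leading exponents; hence no cancellation between distinct series is possible at any weight, and the generic choice of $\omega$ is only needed to see that a single nonzero convergent series of this type is not the zero function (dominant-term asymptotics along $x\mapsto e^{-t\omega}\cdot x$, a flow that preserves $U_T(R)$ precisely because $\omega\in C(T)$).

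Two of your assertions do need justification. The first is minor: ``every point of this domain is a nonsingular point of $\M_A(\beta)$'' is stronger than you may assume for free (it is true for small $R$, but proving it requires computing initial forms of the principal $A$-determinant). You only need that $U_T(R)\cap(\CC^{\ast})^n$ is connected (in logarithmic polar coordinates it is a convex set times a real torus) and, being open and nonempty, is not contained in the singular locus; then the multivalued solutions on it inject by analytic continuation into the germ solution space at a nonsingular point, whose dimension is the rank. The second is the one genuinely missing step: you assert that very generic implies nonresonant, which is what lets you quote $\mathrm{rank}=\operatorname{vol}_{\Z^d}(A)$. This is not automatic, since ``very generic'' only excludes a countable family of hyperplanes with no a priori relation to the resonant ones; it is true, but it needs the following argument. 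If $\beta$ were resonant, some facet $F$ of $\operatorname{pos}(A)$ with primitive inner conormal $\rho$ would satisfy $\langle\rho,\beta\rangle\in\Z$. Since $T$ triangulates $\operatorname{pos}(A)$, some maximal simplex $\sigma\in T$ has exactly $d-1$ of its columns on $F$ (write a relative interior point of $F$ as a nonnegative combination of the columns of $\sigma$ and pair with $\rho$); if $a_i$, $i\in\sigma$, is the remaining column and $h=\langle\rho,a_i\rangle>0$, then $\rho/h$ is the row of $A_{\sigma}^{-1}$ dual to $a_i$, so the corresponding coordinate of $A_{\sigma}^{-1}(\beta-A_{\overline{\sigma}}\mathbf{k})$ equals $\bigl(\langle\rho,\beta\rangle-\langle\rho,A_{\overline{\sigma}}\mathbf{k}\rangle\bigr)/h$. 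As in the paper's proof of Theorem \ref{main-theorem}, $\rho$ induces a surjection $\Z^d/\Z A_{\sigma}\to\Z/h\Z$, and $\mathbf{k}\mapsto A_{\overline{\sigma}}\mathbf{k}$ maps $\Omega_{\sigma}$ onto $\Z^d/\Z A_{\sigma}$ (this is exactly the paper's count $|\Omega_{\sigma}|=\operatorname{vol}_{\Z^d}(\sigma)$); hence some $\mathbf{k}\in\Omega_{\sigma}$ makes the above coordinate an integer, contradicting very genericity. With these two repairs your proof is complete and matches the route of the works the paper cites.
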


For a regular triangulation $T$, set $U_T :=U_T (R)$ for some $R>0$ as in Theorem \ref{key2theorem}.

\section{Local monodromy computation}\label{Sec4}

Let us denote by $\mathscr{S}_A$ the singular locus of $\M_A (\beta )$. A hyperplane $x_j=0$ is contained in $\mathscr{S}_A$ if and only if $a_j$ is a vertex of the polytope $\Delta_A$. Since reordering the variables is equivalent to reordering the columns of $A$ we will assume for simplicity and 
without loss of generality that $a_n$ is a vertex of $\Delta_A$ and we will study the local monodromy of the solutions of $M_A (\beta)$ around $\{x_n=0\}$. Let us consider a complex line $ \mathbb{L}_{c}:=\{x\in \C^n: \; x_j= c_j , \; 1\leq j \leq n-1\}$ with $c=(c_1 ,\ldots , c_{n-1})\in \C^{n-1}$ such that $\mathbb{L}_{c}$ intersects 
$\mathscr{S}_A$ in at most a finite number of points and $(c,0):=(c_1,\ldots , c_{n-1},0)$ does not belong to a component of $\mathscr{S}_A$ different from $\{ x_n = 0\}$. We consider the loop $\gamma_{\varepsilon, c}$ parametrized by $x_j = c_j$ for $1\leq j \leq n-1$ and $x_n =\varepsilon e^{2 \pi i \theta}$, $\theta \in [0,1]$, for $\varepsilon >0$ small enough so 
that $\mathbb{L}_{c}\cap \mathscr{S}_A \setminus \{x_n =0\} \subseteq \{x\in \mathbb{L}_{c} :\; |x_n |>\varepsilon \}$. We include a proof of the following known result for the sake of completeness.

\begin{lemma}\label{char-poly-lemma}
The characteristic polynomial of the local monodromy with respect to $\gamma_{\varepsilon, c}$ of the solutions of a holonomic $D$--module with singular locus $\mathscr{S}$ does not depend on $c$ or $\varepsilon$ chosen as above.
\end{lemma}

\begin{proof}
It is clear that for fixed $c$ as above the monodromy matrix will be independent of $\varepsilon>0$ small enough. Let $d$ be another point chosen as $c$. If $Z$ denotes the union of all the irreducible components of 
$\mathscr{S}$ different from $\{x_n=0\}$, then $(c,0)$ and $(d,0)$ belong to $\{x_n=0\}\setminus Z$, which is connected. Let $\alpha: [0,1]\longrightarrow \{x_n=0\}\setminus Z$ be a path from $(c,0)$ to $(d,0)$. Then the family of loops 
$\{\gamma_{\varepsilon, c'}: \; (c',0)\in \alpha ([0,1]) \}$ is a continuous family with respect to $c'$ and since $\C^n \setminus Z$ is an open set we can choose $\varepsilon >0$ small enough so that none of these loops intersect $\mathscr{S}$.
In particular one can deform continuously $\gamma_{\varepsilon, c}$ into $\gamma_{\varepsilon, d}$ in $\C^n \setminus \mathscr{S}$ in such a way that the point  $\gamma_{\varepsilon, c}(0)$ is transformed into $\gamma_{\varepsilon, d}(0)$ along a path 
$\delta$ such that $\delta (t)= \gamma_{\varepsilon, c'} (0)$ whenever $(c',0)=\alpha (t)$. Assume that $M_c$ is the monodromy matrix corresponding to $\gamma_{\varepsilon, c}$ (for a fixed basis of solutions), 
$M_d$ is the monodromy matrix corresponding to $\gamma_{\varepsilon, d}$ (for another fixed basis of solutions) and $C$ is the connecting matrix (between those bases) for the path  
$\delta$. Then we have that $C^{-1} M_d C= M_c$ and thus $M_d$ and $M_c$ have the same characteristic polynomial.
\end{proof}

Let us set some more notation in order to state the main result in this section. We consider the regular subdivision $T_0$ (see Notation \ref{regular-subdivision-T0}) and the set $$\Sigma =\{ \tau \in T_0 :\; n \in \tau \}.$$ 

Each $\tau \in \Sigma$ is of the form $\Gamma (\tau) \cup \{ n\}$ for some $(d-2)$-dimensional face $\Gamma (\tau)$ of $T_0$ by Remark \ref{T0description}. 
Let $\rho (\tau )$ be the primitive inner conormal vector of the facet $\Delta_{\Gamma (\tau )}$ of the polytope $\Delta_{\tau}$ and set $h (\tau )=\langle \rho (\tau ), a_n\rangle >0$.
    
We obtain the following theorem for the local monodromy around $x_n=0$, which is reminiscent of the main theorem in \cite{AET} for the local monodromy around $x_n =\infty$. 

\begin{theorem}\label{main-theorem}
If the parameter vector $\beta \in \CC^d$ is nonresonant then the characteristic polynomial of the local monodromy of the solutions of $\cM_A (\beta)$ around $x_n =0$ is 
$$\lambda_{0} (z)= (z-1)^{\operatorname{vol}_{\Z^d} (A)- \sum_{\tau \in \Sigma} \operatorname{vol}_{\Z^d} (\tau)} \prod_{\tau \in \Sigma} ( z^{h (\tau)}- e^{2 \pi i \langle  \rho (\tau) , \beta \rangle } )^{\operatorname{vol}_{\Z^d}(\tau)/ h (\tau)}$$
\end{theorem}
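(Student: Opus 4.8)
We want the characteristic polynomial of local monodromy around $x_n = 0$ for $A$-hypergeometric solutions with nonresonant $\beta$.

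**Key tools available:**
- Theorem \ref{key2theorem}: for a regular triangulation $T$ refining $\Gamma_A$ and very generic $\beta$, the $\Gamma$-series $\{\phi_\sigma^{\mathbf{k}}\}$ form a basis of solutions in $U_T(R) \cap (\mathbb{C}^*)^n$.
- Lemma \ref{key-Lemma}: $T$ refines $T_0$ iff $U_T \cap \{x_n = 0\} \neq \emptyset$ iff $\omega_0 \in \overline{C(T)}$.

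**The strategy I'd use:**

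The loop $\gamma_{\varepsilon, c}$ is near $x_n = 0$. To compute monodromy, I need a basis of solutions defined in a neighborhood of this loop, i.e., an open set that meets $\{x_n = 0\}$.

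By Lemma \ref{key-Lemma}, I should choose $T$ to be a regular triangulation refining $T_0$ (which refines $\Gamma_A$). Then $U_T \cap \{x_n = 0\} \neq \emptyset$, so the basis $\{\phi_\sigma^{\mathbf{k}}\}$ is defined near the loop.

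**How monodromy acts on $\phi_\sigma^{\mathbf{k}}$:**

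Recall
$$\phi_\sigma^{\mathbf{k}} = x_\sigma^{A_\sigma^{-1}\beta} \sum_{\ldots} \frac{x_\sigma^{-A_\sigma^{-1}(\cdots)} x_{\overline\sigma}^{\mathbf{k}+\mathbf{m}}}{\cdots}$$

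The key is how the exponent of $x_n$ behaves under $x_n \mapsto e^{2\pi i} x_n$.

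**Case 1: $n \notin \sigma$.** Then $x_n$ appears only via $x_{\overline\sigma}^{\mathbf{k}+\mathbf{m}}$ with integer exponent $k_n + m_n \in \mathbb{N}$. So $\phi_\sigma^{\mathbf{k}}$ is single-valued in $x_n$ — eigenvalue $1$.

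**Case 2: $n \in \sigma$.** The $x_n$-exponent comes from $x_\sigma^{A_\sigma^{-1}\beta}$ and the sum. Writing $\sigma = \Gamma(\tau) \cup \{n\}$ with $\tau \in \Sigma$... the monodromy multiplies by $e^{2\pi i \cdot (\text{last coord of } A_\sigma^{-1}\beta - A_\sigma^{-1}(\cdots))}$.

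**The proof plan:**

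The plan is to reduce the monodromy computation to the action on the $\Gamma$-series basis. First I would choose any regular triangulation $T$ refining the subdivision $T_0$; such triangulations exist (refine $T_0$ generically) and by Lemma \ref{key-Lemma} the associated open set $U_T$ meets $\{x_n=0\}$, so by Theorem \ref{key2theorem} (applied for very generic $\beta$, with the nonresonant case handled by deformation/continuity of the characteristic polynomial in $\beta$) the series $\{\phi_\sigma^{\mathbf{k}} : \sigma \in T, \mathbf{k}\in\Omega_\sigma\}$ form a basis of solutions in a neighborhood of the loop $\gamma_{\varepsilon,c}$. The total number of basis elements is $\sum_{\sigma\in T}\operatorname{vol}_{\Z^d}(\sigma)=\operatorname{vol}_{\Z^d}(A)$, matching the degree of $\lambda_0(z)$.

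Next I would compute the monodromy action on each basis element. Since the loop only varies $x_n=\varepsilon e^{2\pi i\theta}$ while $x_j=c_j$ are fixed, the monodromy multiplies $\phi_\sigma^{\mathbf{k}}$ by the factor $e^{2\pi i e_\sigma}$, where $e_\sigma$ is the exponent of $x_n$ in the series. When $n\notin\sigma$, every term carries a nonnegative \emph{integer} power $k_n+m_n$ of $x_n$, so $\phi_\sigma^{\mathbf{k}}$ is single-valued and contributes eigenvalue $1$. When $n\in\sigma$, writing $\sigma=\Gamma(\tau)\cup\{n\}$ for the corresponding $\tau\in\Sigma$, the exponent of $x_n$ equals the last coordinate of $A_\sigma^{-1}(\beta-\sum_{i\notin\sigma}(k_i+m_i)a_i)$; I would show this last coordinate equals $\langle\rho(\tau),\beta\rangle/h(\tau)$ modulo $(1/h(\tau))\Z$, using that $\rho(\tau)$ is the inner conormal to the facet $\Delta_{\Gamma(\tau)}$ (so $\rho(\tau)$ is orthogonal to the columns in $\Gamma(\tau)$ and pairs to $h(\tau)$ with $a_n$). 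This is the computational heart of the argument.

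I would then assemble the characteristic polynomial by grouping eigenvalues. The simplices $\sigma$ with $n\notin\sigma$ account for all eigenvalue-$1$ contributions plus the part of each $n\in\sigma$ block that does \emph{not} see $x_n$; a bookkeeping argument via $\operatorname{vol}_{\Z^d}$ shows these total $\operatorname{vol}_{\Z^d}(A)-\sum_{\tau\in\Sigma}\operatorname{vol}_{\Z^d}(\tau)$, giving the factor $(z-1)^{\operatorname{vol}_{\Z^d}(A)-\sum_\tau\operatorname{vol}_{\Z^d}(\tau)}$. For each $\tau\in\Sigma$, the simplices refining $\Gamma(\tau)\cup\{n\}$ contribute $\operatorname{vol}_{\Z^d}(\tau)$ eigenvalues; the exponents of $x_n$ run (mod $\Z$) over the residues $\{\langle\rho(\tau),\beta\rangle+j\}/h(\tau)$ for $j=0,\ldots,h(\tau)-1$, each with multiplicity $\operatorname{vol}_{\Z^d}(\tau)/h(\tau)$, since the index $[\Z^d:\Z\Gamma(\tau)+\Z a_n]$ governs how the $h(\tau)$ distinct $h(\tau)$-th roots appear. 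The $h(\tau)$ eigenvalues $e^{2\pi i(\langle\rho(\tau),\beta\rangle+j)/h(\tau)}$ are exactly the roots of $z^{h(\tau)}-e^{2\pi i\langle\rho(\tau),\beta\rangle}$, yielding the claimed product.

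**Where the main obstacle lies:**

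The hard part will be the precise eigenvalue count for each $\tau\in\Sigma$: showing that among the $\operatorname{vol}_{\Z^d}(\tau)$ basis functions attached to $\tau$, the $x_n$-exponents distribute into exactly $h(\tau)$ arithmetic residues mod $1$ with equal multiplicity $\operatorname{vol}_{\Z^d}(\tau)/h(\tau)$. This requires analyzing the equivalence classes $\Omega_\sigma$ (defined by $A_{\overline\sigma}\mathbf{k}\equiv A_{\overline\sigma}\mathbf{k'}\bmod \Z A_\sigma$) and tracking the last coordinate of $A_\sigma^{-1}(\beta - A_{\overline\sigma}\mathbf{k})$ as $\mathbf{k}$ ranges over representatives, while simultaneously summing over all simplices $\sigma$ in $T$ that contain $\{n\}$ and refine $\Gamma(\tau)$. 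The nonresonance hypothesis guarantees these residues are the clean roots $e^{2\pi i(\langle\rho(\tau),\beta\rangle+j)/h(\tau)}$ and that the factorization of $\lambda_0(z)$ holds with integer multiplicities.
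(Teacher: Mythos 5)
Your proposal is correct and follows essentially the same route as the paper's own proof: choose a regular triangulation $T$ refining $T_0$, invoke Lemma \ref{key-Lemma} and Theorem \ref{key2theorem} to get a $\Gamma$--series basis near the loop, observe that continuation along $\gamma_{\varepsilon,c}$ multiplies $\phi_{\sigma}^{\mathbf{k}}$ by $e^{2\pi i (v_{\sigma}^{\mathbf{k}})_n}$, split into the cases $n\notin\sigma$ (eigenvalue $1$) and $n\in\sigma$, and pass from very generic to nonresonant $\beta$ by holomorphic dependence of the monodromy on the parameter (the paper cites the proof of \cite[Corollary 3.3]{AET} for this). The counting step you defer as the ``main obstacle'' is resolved in the paper exactly as you sketch it: one checks $\rho(\tau)=h(\tau)\, e_d A_{\sigma}^{-1}$, so $(v_{\sigma}^{\mathbf{k}})_n=\frac{1}{h(\tau)}\langle\rho(\tau),\beta-A_{\overline{\sigma}}\mathbf{k}\rangle$, and the primitive vector $\rho(\tau)$ induces a surjection $\Z^d/\Z A_{\sigma}\twoheadrightarrow\Z/h(\tau)\Z$ whose fibers have cardinality $\operatorname{vol}_{\Z^d}(\sigma)/h(\tau)$, giving the equidistribution of residues and, after summing $\operatorname{vol}_{\Z^d}(\sigma)$ over $\sigma\subseteq\tau$, the exponent $\operatorname{vol}_{\Z^d}(\tau)/h(\tau)$.
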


\begin{proof} 
By Lemma \ref{char-poly-lemma} we are allowed to choose any convenient $c$ and $\varepsilon>0$ as above in order to compute the characteristic polynomial. 
Let $T$ be any regular triangulation of $A$ that refines $T_0$ and assume first that $\beta\in \CC^d$ is very generic. By Lemma \ref{key-Lemma}, the open set $U_T $ intersects the hyperplane $x_n=0$. We choose $(c_1, \ldots, c_{n-1},c_n )\in U_T \cap (\CC^{\ast})^n$ and $0<\varepsilon <|c_n|$ so that the loop $\gamma_{c,\varepsilon}$ is contained in $U_T \cap (\CC^{\ast})^n$.
On the other hand, from Theorem \ref{key2theorem} there is a fundamental set of $\operatorname{vol}_{\Z^d}(A)$ many (multivalued) holomorphic solutions that can be written as $\Gamma$--series in the open set $U_T\cap (\CC^{\ast})^n$. 
By analytic continuation along $\gamma_{\varepsilon, c}$, each series $\phi_{\sigma}^{\mathbf{k}}$ is transformed into $e^{2 \pi i (v_{\sigma}^{\mathbf{k}})_n } \phi_{\sigma}^{\mathbf{k}}$. Hence, we just need to show that the roots of the polynomial 
$\lambda_0 (z)$ are exactly $\{e^{2 \pi i (v_{\sigma}^{\mathbf{k}})_n }: \; \sigma \in T , \mathbf{k} \in \Omega_{\sigma} \}$ where each root is repeated as many times as its multiplicity. Notice that both the degree of $\lambda_0$ and the cardinality of the pairs 
$(\sigma, \mathbf{k})$ are equal to $\operatorname{vol}_{\Z^d}(A)$.

Indeed, for all $\tau \in T_0 \setminus \Sigma$ and for all $\sigma \in T$ with $\sigma \subseteq \tau$ we have $(v_{\sigma}^{\mathbf{k}})_n = k_n \in \Z$ and thus $e^{2 \pi i (v_{\sigma}^{\mathbf{k}})_n }=1$. This corresponds with the factor
$(z-1)^{\operatorname{vol}_{\Z^d} (A)- \sum_{\tau \in \Sigma} \operatorname{vol}_{\Z^d} (\tau)}$ of $\lambda_0$. Let us consider now $\tau \in \Sigma$ and $\sigma \in T$ with $n\in \sigma \subseteq \tau $. 
Let $\alpha >0$ be the smallest positive integer such that $\alpha e_n A_{\sigma}^{-1} \in \Z^d$ where $e_n=(0,\ldots, 0, 1)\in \R^{\sigma}$. It is clear that $\rho (\tau)=  \alpha e_n A_{\sigma}^{-1}$ and that $h(\tau)=\alpha$. On the other hand, we have 
$(v_{\sigma}^{\mathbf{k}})_n = e_n A_{\sigma}^{-1}(\beta - A_{\overline{\sigma}} \mathbf{k})=\dfrac{1}{h(\tau)}\langle \rho (\tau), \beta-A_{\overline{\sigma}} \mathbf{k} \rangle$.
Since the primitive vector $\rho (\tau )$ induces a surjective morphism of abelian groups from $\Z^d / \Z A_{\sigma}$ to $\Z /h(\tau)\Z$ we have that 
its kernel has cardinality equal to $\operatorname{vol}_{\Z^d}(\sigma) / h (\tau)$. Since 
$\sum_{\tau \supseteq \sigma \in T} \operatorname{vol}_{\Z^d} (\sigma )= \operatorname{vol}_{\Z^d} (\tau)$ we get the result for very generic parameters. 
On the other hand, from the proof of \cite[Corollary 3.3]{AET} (which does not use rapid decay cycles of \cite{ET}) we have that the local monodromy matrix depends holomorphically on nonresonant parameters.
\end{proof}

\begin{remark}
By \cite{DMM}, running the canonical series algorithm introduced in \cite[Chapter 2.5.]{SST} for a weight vector $\w$ such that $(1,\ldots, 1) \in C(T_{\w})$ produces a basis of convergent series solutions of $M_A (\beta)$ for any $\beta\in \C^d$. In particular,
this holds if $\w_0 \in C(T_{\w})$ and in this case these series are convergent in the open set $U_{T_{\w}}$. Thus, by Lemma \ref{key-Lemma}, one 
could use these series in order to compute the characteristic polynomial $\lambda_{0} (z)$ for any $\beta$.
\end{remark}

\begin{remark}
In \cite{RSW} the authors compute an upper bound for the set of roots of the $b$--function of $M_A (\beta)$ for the restriction at $x_j=0$. In the case when $M_A (\beta )$ is regular holonomic, if $\alpha$ is a root of the $b$--function then $e^{2 \pi i \alpha}$ is 
a root of $\lambda_0 (z)$, but in the irregular case the roots of this $b$--function might be related with monodromy of non convergent $\Gamma$--series instead. On the other hand, their description uses \emph{quasidegrees} of certain \emph{toric modules} instead of a polyhedral 
subdivision of $A$, so it does not seem obvious to compare their candidates to roots with the roots of $\lambda_0 (z)$. 
\end{remark}

\begin{remark}\label{remark-infinity}
Consider the vector $\w_{\infty}= (1,\ldots, 1, 1-\varepsilon)$ for $\varepsilon >0$ small enough. 
Take $T_{\infty}$ to be the regular subdivision of $A$ induced by $\w_{\infty}$, then $T_{\infty}$ is a refinement of $\Gamma_A$. If we substitute $T_0$ by $T_{\infty}$, $\w_0$ by $\w_{\infty}$ and consider loops of the form 
$\gamma_{c,\nu}$ with $\nu>0$ big enough so that $\mathbb{L}_{c}\cap \mathscr{S}_A \subseteq \{x\in \mathbb{L}_{c} :\; |x_n |< \nu \}$ and with the opposite orientation, then for any regular triangulation that refines $T_{\infty}$ we have that $U_T$ contains loops of this type (the proof is 
analogous to the proof of Lemma \ref{key-Lemma}). With these modifications, the proof of Theorem \ref{main-theorem} gives an alternative combinatorial proof of the main theorem in \cite{AET}. 
\end{remark}

\begin{example}\label{example}
Let us consider the matrix $$A=\left(\begin{array}{ccccc}
                                      1 & 1 & 1 & 1 & 1\\
                                      0 & 1 & 0 & 1 & 2\\
                                      0 & 0 & 1 & 1 & 2\end{array}\right)$$ and a nonresonant parameter vector $\beta \in \CC^3$. 
                                       
\begin{figure}[h]\label{Figure}
\centering
\includegraphics[scale=0.4]{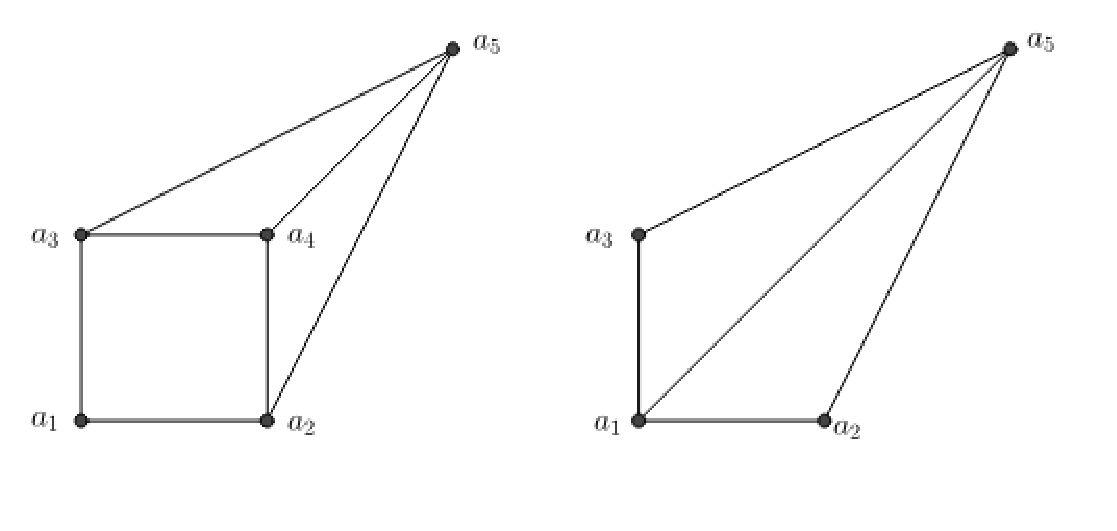}
\caption{Regular subdivisions $T_0$ (left) and $T_{\infty}$ (right) of $A$.}
\end{figure}

The facets of the regular subdivision $T_0$ are $\tau_1=\{1,2,3,4\}$, $\tau_2=\{2,4,5\}$ and $\tau_3=\{3,4,5\}$ (see Figure 1). We have that  $\operatorname{vol}_{\Z A} (\tau_1)=2$, $\Sigma =\{ \tau_2 , \tau_3 \}$, 
$\operatorname{vol}_{\Z A } (\tau)=1$ and $h(\tau)=1$ for $\tau \in \Sigma$, $\rho (\tau_2)=(-1,1,0)$ and $\rho (\tau_3 )=(-1,0,1)$. The characteristic polynomial for the monodromy of the solutions of $M_A (\beta )$ around $x_5=0$ is given by 
$$\lambda_0 (z)= (z-1)^2 (z-e^{2 \pi i (\beta_2- \beta_1)})(z-e^{2 \pi i (\beta_3 -\beta_1)}).$$ On the other hand, the facets of $T_{\infty}$ are $\{1,2,5\}$ and $\{1,3,5\}$. The characteristic polynomial $\lambda_{\infty}$ for the monodromy around $x_5 =\infty$ is $$\lambda_{\infty} (z)=(z^2-e^{-2 \pi i \beta_2})(z^2-e^{-2 \pi i \beta_3}).$$

\end{example}

\section{Monodromy invariant subspaces corresponding to facets of $\Gamma_A$}\label{Sec5}

From \cite[Theorem 4.1]{BMW} an $A$--hypergeometric function can only have singularities along the zero set of the product of the principal $\tau$--determinants $\mathcal{E}_{\tau}=\mathcal{E}_{A_\tau}\in \C [x_{\tau}]:=\C [x_j : \; j\in \tau ]$ for $\tau$ varying between the different facets of $\Gamma_A$ 
(see \cite[Chapter 10, Equation 1.1]{GKZ3} for the definition of $\mathcal{E}_{A}$). The following conjecture would provide a refinement of this result:

\begin{conjecture}\label{monodromy-invariant-subspaces-conjecture}
If $\tau$ is a facet of $\Gamma_A$ and $\beta$ is nonresonant, then the space of holomorphic solutions of $M_A (\beta)$ at a nonsingular point has a monodromy invariant subspace $S_{\tau}$ of dimension $\operatorname{vol}_{\Z^d}(\tau)$ that 
can only have singularities along the zero set of the principal $\tau$--determinant $\mathcal{E}_{\tau} $. If $T$ is any regular triangulation refining $\Gamma_A$ and $\beta\in \C^d $ is very generic, then we can take $S_{\tau}$ 
the space generated by $\{\phi_{\sigma}^{\mathbf{k}}: \; \sigma\in T (\tau), \; \mathbf{k} \in \Omega_{\sigma} \}$ where $T(\tau):=\{ \sigma \in T:\; \sigma \subseteq \tau\}$.
\end{conjecture}

We prove a particular case of this conjecture.

\begin{prop}\label{monodromy-invariant-subspaces}
Conjecture \ref{monodromy-invariant-subspaces} holds if $\tau$ is a facet of $\Gamma_A$ containing only $d$ columns of $A$.
\end{prop}

\begin{proof}
From the proof of \cite[Corollary 3.3]{AET} we can assume that $\beta$ is very generic. By the assumption, any regular triangulation $T$ refining $\Gamma_A$ satisfies that $T(\tau)=\{\tau\}$.

Notice also that each $\Gamma$-series $\phi_{\tau}^{\mathbf{k}}$ is convergent at any point of $U=\C^{\overline{\tau}}\times (\C^{\ast})^{\tau}$. In particular, it defines a multivalued function with singularities only around the hypersurface $\{\prod_{j\in\tau} x_j =0\}$. Since the fundamental group of the complement of $\cup_{j\in \tau} \{ x_j =0\}$ is the free group generated by one loop around each $x_j=0$ for $j\in \tau$, it is enough to consider the monodromy action with respect to these kind of loops. It is clear that the analytic continuation of $\phi_{\tau}^{\mathbf{k}}$ with respect to a loop around $x_j=0$ is given by $e^{2 \pi i (A_{\tau}^{-1}(\beta-A_{\overline{\tau}}\mathbf{k}))_{j}}\phi_{\tau}^{\mathbf{k}}$. Thus, it follows that the space $S_{\tau}$ generated by $\{\phi_{\tau}^{\mathbf{k}}: \;\mathbf{k} \in \Omega_{\tau} \}$ is monodromy invariant and it is a subspace of dimension $\operatorname{vol}_{\Z^d}(\tau)$ of the space of solutions of $M_A (\beta)$.
\end{proof}

\begin{cor}\label{one-dimensional-invariant}
If there are no more than $d$ columns of $A$ in any facet of $\Gamma_A$ and $\beta\in \C^d$ is nonresonant, then the solution space of $M_A (\beta)$ is a direct sum of one--dimensional (global) monodromy invariant subspaces. 
\end{cor}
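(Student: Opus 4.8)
The plan is to use the numerical hypothesis to reduce everything to the triangulation $\Gamma_A$ itself and then to show that, in the associated basis of convergent $\Gamma$--series, every monodromy operator is diagonal. First I would note that the hypothesis forces every facet of $\Gamma_A$ to be a simplex: a facet $\tau$ of $\Gamma_A$ is a $(d-1)$--dimensional face of $\Delta_A$ not containing the origin, hence contains at least $d$ columns of $A$, so by hypothesis exactly $d$; these are affinely independent and, since the facet avoids the origin, linearly independent, whence $A_\tau$ is invertible and $\tau$ is a simplex. Therefore $\Gamma_A$ is a regular triangulation of $A$ (trivially refining itself), and for very generic $\beta$ Theorem \ref{key2theorem} yields a basis $\{\phi_\sigma^{\mathbf{k}}:\sigma\in\Gamma_A,\ \mathbf{k}\in\Omega_\sigma\}$ of the solution space on $U_{\Gamma_A}(R)\cap(\C^\ast)^n$, of total dimension $\operatorname{vol}_{\Z^d}(A)$.

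Next I would pin down the singular locus, which is independent of $\beta$. For a simplex $\tau$ the configuration $A_\tau$ has trivial discriminant and all of its faces are again simplices, so its principal determinant $\mathcal{E}_\tau$ is, up to a nonzero constant and to multiplicities, the monomial $\prod_{i\in\tau}x_i$ (cf. \cite[Chapter 10]{GKZ3}). Hence by \cite[Theorem 4.1]{BMW} the singular locus of $M_A(\beta)$ is contained in $\bigcup_\tau\{\mathcal{E}_\tau=0\}$, a union of coordinate hyperplanes, so the fundamental group of its complement is generated by the small loops $\gamma_j$ around $x_j=0$.

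Then I would compute the monodromy of each basis element. As in the proof of Theorem \ref{main-theorem}, continuation of $\phi_\sigma^{\mathbf{k}}$ along $\gamma_j$ multiplies it by the single scalar $e^{2\pi i(v_\sigma^{\mathbf{k}})_j}$: for $j\notin\sigma$ every term carries the integer exponent $k_j+m_j$, while for $j\in\sigma$ the exponent of $x_j$ in the term indexed by $\mathbf{m}$ differs from $(v_\sigma^{\mathbf{k}})_j=(A_\sigma^{-1}(\beta-A_{\overline{\sigma}}\mathbf{k}))_j$ by the integer $(A_\sigma^{-1}\sum_{i\in\overline{\sigma}} m_i a_i)_j$, since $\sum_{i\in\overline{\sigma}} m_i a_i\in\Z A_\sigma$ on $\Lambda_{\mathbf{k}}$. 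Thus each generator $\gamma_j$ acts diagonally in the basis $\{\phi_\sigma^{\mathbf{k}}\}$, and since diagonal matrices are closed under products, the whole monodromy group acts diagonally; the solution space is then the direct sum $\bigoplus_{\sigma,\mathbf{k}}\C\,\phi_\sigma^{\mathbf{k}}$ of one--dimensional invariant subspaces. The passage from very generic to arbitrary nonresonant $\beta$ would be handled by the holomorphic dependence of the monodromy on nonresonant parameters, exactly as in the proof of \cite[Corollary 3.3]{AET}.

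I expect the main obstacle to be the second step, namely verifying that $\mathcal{E}_\tau$ is a monomial for a simplex $\tau$, equivalently that the singular locus is genuinely a union of coordinate hyperplanes, so that no monodromy can arise from loops not homotopic to products of the $\gamma_j$. Should one wish to bypass the structure theory of principal determinants, one can instead observe that each $\phi_\sigma^{\mathbf{k}}$ equals $x_\sigma^{A_\sigma^{-1}(\beta-A_{\overline{\sigma}}\mathbf{k})}x_{\overline{\sigma}}^{\mathbf{k}}$ times a single--valued holomorphic factor, so that its only branching is around the coordinate hyperplanes, and combine this with Proposition \ref{monodromy-invariant-subspaces} to confine all monodromy to the loops $\gamma_j$.
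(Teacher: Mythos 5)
Your proposal follows the same route as the paper's proof: the hypothesis makes every facet of $\Gamma_A$ a simplex, hence $\Gamma_A$ a regular triangulation; the singular locus lies in the union of coordinate hyperplanes; each $\Gamma$--series in the basis of Theorem \ref{key2theorem} for $T=\Gamma_A$ and very generic $\beta$ is an eigenvector of the monodromy along each $\gamma_j$; and one passes to arbitrary nonresonant $\beta$ via the proof of \cite[Corollary 3.3]{AET}. Your first two steps are correct (the paper merely asserts them, and your derivation of the singular locus statement from \cite[Theorem 4.1]{BMW} together with the fact that $\mathcal{E}_\tau$ is a monomial when $\tau$ is a simplex is a valid substitute).

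There is, however, a gap in the monodromy step. The term-by-term computation of the analytic continuation of $\phi_\sigma^{\mathbf{k}}$ along $\gamma_j$ is legitimate only if a loop representing the class of $\gamma_j$ in $\pi_1(\C^n\setminus S)$ can be chosen inside the common domain of convergence of all the series. In the proof of Theorem \ref{main-theorem} this is exactly what Lemma \ref{key-Lemma} supplies, and only for the single hyperplane $x_n=0$ and for triangulations refining $T_0$; here you need it simultaneously for every coordinate hyperplane, and you never verify it. This is where the hypothesis is used a second time, and it is the point the paper flags just before Theorem \ref{key2theorem} (that $U_T(R)$ is cut out by fewer inequalities than in \cite{Ohara-Takayama} is said to be important for Section \ref{Sec5}): if $\sigma\in\Gamma_A$ is a facet, its supporting functional satisfies $\mathbf{c}=(1,\ldots,1)A_\sigma^{-1}$ by (\ref{equality-subdivision}), so for every $j\notin\sigma$ one has $|A_\sigma^{-1}a_j|=\langle \mathbf{c},a_j\rangle<1$ by (\ref{inequality-subdivision}); thus no inequality at all appears in (\ref{open-set-simplex}) and $U_{\Gamma_A}(R)\cap(\C^\ast)^n=(\C^\ast)^n$. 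With this in hand the series are multivalued holomorphic functions on the entire complement of the singular locus, branching only by the scalars you computed around the coordinate hyperplanes, and your argument closes. (Alternatively, one could note that $U_T(R)\cap(\C^\ast)^n$ is invariant under the action of the compact torus, so the torus orbit of the base point already realizes every class of $\pi_1((\C^\ast)^n)$, which surjects onto $\pi_1(\C^n\setminus S)$; some justification of this kind is indispensable.) The same caveat applies to your fallback remark: the single-valued holomorphic factor of $\phi_\sigma^{\mathbf{k}}$ is a priori single-valued only on its domain of convergence, so without the observation above its branching locus is not yet controlled.
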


\begin{proof}

The proof of Proposition \ref{monodromy-invariant-subspaces} can be applied to each facet of $\Gamma_A$ in this case. In particular, for very generic $\beta\in \C^d$ the set of $\Gamma$--series given in Theorem \ref{key2theorem} for $T=\Gamma_A$ is a basis of (multivalued) holomorphic functions in $U_T \cap (\C^{\ast})^n = (\C^{\ast})^n$ and the $\C$-linear space generated by each $\Gamma$-series is monodromy invariant. Thus, from the proof of \cite[Corollary 3.3]{AET} we get the result for nonresonant $\beta\in \C^d$.

%
\end{proof}

The following lemma generalizes \cite[Proposition 6.8]{Saito}.

\begin{lemma}\label{lemma-reducibility}
Let $M$ be a holonomic $D$--module and $M(x):=\C (x)\otimes_{\C [x] } M$. Then $M(x)$ is reducible as a module over $D (x)=\C (x)\otimes_{\C [x] } D$ if and only if $M$ has a quotient $D$--module with holonomic rank between $1$ and $\operatorname{rank}(M)-1$. 
In particular, if $M(x)$ is reducible then the solution space of $M$ has a proper monodromy invariant subspace.
\end{lemma}
\begin{proof}
The if direction holds because $\C(x)\otimes_{\C [x]}$ is a right exact functor and the fact that $\operatorname{rank}(N)=\dim_{\C (x)}(N(x))$ for any holonomic $D$--module $N$, which is a well known result due to Kashiwara. Let us prove the 
converse. Since $M$ is holonomic we may assume that $M=D/I$ for some left ideal $I\subseteq D$. If $M(x)$ is reducible then it has a proper submodule of the form $N=J/D(x)I$ for some left ideal $J\subsetneq D(x)$. Moreover, $1\leq \dim_{\C (x)}(N)< \dim_{\C (x)}(M)=\operatorname{rank}(M)$.
Hence the quotient $M(x)/N$ is isomorphic to $D(x)/J$ and its $\C(x)$--dimension is between $1$ and $\operatorname{rank}(M)-1$. We have that $I \subseteq D\cap D(x)I\subseteq \widetilde{J}:=D\cap J \subsetneq D$ and so $D/\widetilde{J}$ is a quotient of 
$M$. Moreover, it is obvious that $\widetilde{J}(x)=J$. Thus, $\operatorname{rank}(D/\widetilde{J})=\dim_{\C (x)} (D(x)/J)$ is between $1$ and $\operatorname{rank}(M)-1$. The last statement follows from the fact that the solution space of any quotient of $M$ 
is a subspace of the solutions of $M$.
\end{proof}

\begin{remark}
Let us point out that there are reducible holonomic $D$--modules without quotients of nonzero smaller rank. For example, $M=D/Dx\partial$ is reducible and $N=D/D\partial$ is a quotient of $M$, but $M(x)=N(x)$ is an irreducible 
$D(x)$--module. 
\end{remark}

\begin{remark}
For any holonomic reducible $D$--module $M$ such that $M(x)$ is irreducible there is an irreducible subquotient $N$ of $M$ with the same 
rank such that the solution spaces of $M$ and $N$ at nonsingular points are isomorphic (although their solutions complexes are not). If $N$ is regular holonomic then its irreducibility is equivalent to the irreducibility of its solution complex by the Riemann-Hilbert correspondence (cf. \cite{Kas84}, \cite{Meb84}). 
In particular, if $M$ is regular holonomic and $M(x)$ is irreducible then its solution space does not have any proper monodromy invariant subspace. However this is not true in general when $M$ is not regular holonomic. 
In fact, we will see in in Corollary \ref{one-dimensional-invariant} that there is an infinite family of irregular hypergeometric systems $M_A (\beta)$ with a proper monodromy invariant subspace even when $M_A (\beta)(x)$ is irreducible.
\end{remark}

Let $G$ be a minimal set of columns of $A$ such that $\operatorname{pos}(G)$ is a face of $\operatorname{pos}(A)$ and 
$\operatorname{rank}_{\Z} (\Z G)+ \operatorname{card}(\overline{G})=d$. Then, by \cite[Lemma 3.7. (10)]{SW2} it is equivalent to study the solutions of $M_A (\beta)$ 
and the solutions of $M_G (\beta_G )$ where $\beta=\beta_G + \beta_{\overline{G}}$ and $\beta_G \in \C G$ and $\beta_{\overline{G}}\in \C \overline{G}$ are unique.

\begin{theorem}\label{reducible-monodromy-representation}
If Conjecture \ref{monodromy-invariant-subspaces-conjecture} holds, the solution space of $M_A (\beta)$ at any nonsingular point has reducible monodromy representation if and only if at least one of the following conditions holds:
\begin{enumerate}
 \item[i)] $\beta_G \in \C^d$ is resonant for $G$.
 \item[ii)] There is no hyperplane off the origin containing all the columns of $A$.
\end{enumerate}
\end{theorem}
\begin{proof}
If $G=\emptyset$ we consider by convention that $\beta_G=0$ is nonresonant for $G$. In this case $\operatorname{rank}(M_A (\beta))=\operatorname{vol}_{\Z^d}(A)=1$ and so the space of solutions has irreducible monodromy representation. 
By \cite[Lemma 3.7. (10)]{SW2} we can assume without loss of generality that $G=A$ in order to simplify the proof (otherwise everything would be written for $\beta_G$ and $G$ instead of 
$\beta$ and $A$ respectively). 

Let us prove first the if direction. If $\beta$ is resonant, then by the proof of \cite[Theorem 4.1]{SW2} we have that $M_A (\beta)(x)$ is a reducible  
$D(x)$--module and it is enough to use Lemma \ref{lemma-reducibility} in this case. Thus we can assume now that $\beta\in \C^d $ is nonresonant and that ii) holds.
If $\Gamma_A$ has at least two facets and $\tau$ is one of them, since we assume that Conjecture \ref{monodromy-invariant-subspaces-conjecture} holds, there exists a proper 
monodromy invariant subspace of solutions of $M_A (\beta)$ of dimension $\operatorname{vol}_{\Z^d}(\tau)<\operatorname{vol}_{\Z^d}(A)= \operatorname{rank}(M_A (\beta))$.

We can assume now that $\Gamma_A=\tau$ is a facet and that all the columns of $A$ that do not belong to $\tau$ belong to $\Delta_A \setminus \tau$. 
Then the variety $Y_{\tau}=\{x_j =0 :\; \forall j\notin \tau\}$ is non--characteristic for $M_{A}(\beta)$. Thus, by \cite[Theorem 2.3.1]{Kas70}, the restriction of the solution sheaf of $M_A (\beta)$ to $Y_{\tau}$ is isomorphic to the solution sheaf of the $D$--module restriction of $M_A (\beta)$ to $Y_{\tau}$. 
Since $\Z A \neq \Z \tau$, by \cite[Theorem 2.1]{FW}, the restriction of $M_A (\beta)$ to $Y_{\tau}$ is isomorphic to 
$\oplus_{\beta ' \in \Lambda} M_{\tau}(\beta ')$ for certain set $\Lambda$ of cardinality $[\Z A : \Z \tau ]>1$. Moreover, if we denote by 
$ V(\mathcal{E}_{\tau})\subseteq \C^{\tau}$ the zero set in $\C^{\tau}$ of the principal $\tau$--determinant, 
it is clear that $\pi_1 (\C^n \setminus (\C^{\overline{\tau}}\times V(\mathcal{E}_{\tau}))\simeq \pi_1 (\C^{\tau} \setminus  V(\mathcal{E}_{\tau}))$. 
Thus, since the solution space of $\oplus_{\beta ' \in \Lambda} M_{\tau}(\beta ')$ has reducible monodromy representation we have that the solution space of 
$M_A (\beta)$ does too. 

For the converse, assume that i) and ii) are false, then $A=\tau$, $M_A (\beta)$ is regular holonomic by \cite{Hotta} and the solution space of $M_A (\beta)$ has irreducible monodromy representation for nonresonant parameters by \cite[2.11 Theorem]{GKZ2}. 
\end{proof}

\vspace{.5cm}

\noindent \textit{E-mail address:} \texttt{mcferfer@us.es}

\noindent Departamento de \'Algebra \& Instituto de Matem\'aticas de la Universidad de Sevilla (IMUS), Universidad de Sevilla.

\end{document}